\title{A conjecture of Erd\H{o}s on graph Ramsey numbers}
\author{
Benny Sudakov\thanks{Department of Mathematics,
UCLA,  Los Angeles, CA 90095. Email: {\tt bsudakov@math.ucla.edu}.
Research supported in part by NSF CAREER award DMS-0812005 and by a USA-Israeli BSF grant.}}
\newenvironment{proof}
      {\medskip\noindent{\bf Proof.}\hspace{1mm}}
      {\hfill$\Box$\medskip}
\def\qed{\ifvmode\mbox{ }\else\unskip\fi\hskip 1em plus 10fill$\Box$}
\newtheorem{theorem}{Theorem}[section]
\newtheorem{lemma}[theorem]{Lemma}
\newtheorem{corollary}[theorem]{Corollary}
\newtheorem{definition}[theorem]{Definition}
\begin{document}
\date{}

\maketitle

\begin{abstract}
The Ramsey number $r(G)$ of a graph $G$ is the minimum $N$ such that every
red-blue coloring of the edges of the complete graph on $N$ vertices
contains a monochromatic copy of $G$. Determining or estimating these
numbers is one of the central problems in combinatorics.

One of the oldest results in Ramsey Theory, proved by Erd\H{o}s and Szekeres in
1935, asserts that the Ramsey number of the complete graph with $m$
edges is at most $2^{O(\sqrt{m})}$. Motivated by this estimate 
Erd\H{o}s conjectured, more than a quarter
century ago, that there is an absolute constant $c$ such
that $r(G) \leq 2^{c\sqrt{m}}$ for any graph $G$ with $m$ edges and no isolated vertices. In this short note
we prove this conjecture.
\end{abstract}

\section{Introduction}
Ramsey theory refers to a large body of deep results in mathematics
whose underlying philosophy is captured succinctly by the statement that
``Every large system contains a large well organized subsystem.''
Since the publication of the seminal paper of Ramsey \cite{R30} 
in 1930, this subject has grown into one of the most active areas of research within combinatorics,
overlapping variously with number theory, geometry, analysis, logic and computer science.

Given a graph $G$, the {\it Ramsey number} $r(G)$ is defined to be the smallest
natural number $N$ such that, in any two-coloring of the edges of the complete graph
$K_N$ on $N$ vertices, there exists a monochromatic copy of $G$. 
Existence of $r(G)$ for all graphs follows from Ramsey's theorem 
and determining or estimating these numbers is one of the
central problems in combinatorics (see, e.g., the book \cite{GRS90} for details). 
Probably the most famous question in the field is that of estimating the Ramsey number
$r(K_n)$ of the complete graph on $n$ vertices. A classical result of Erd\H{o}s and
Szekeres~\cite{ES35}, which is a quantitative version of Ramsey's
theorem, implies that $r(K_n) \leq 2^{2n}$ for every positive
integer $n$. Erd\H{o}s~\cite{E47} showed using probabilistic
arguments that $r(K_n)> 2^{n/2}$ for $n> 2$. Over the last sixty years, there have been several
improvements on these bounds (see, e.g., \cite{C08}). However,
despite efforts by various researchers, the constant factors in
the above exponents remain the same. Unsurprisingly then, the field has stretched in different directions and the focus
has turned towards the study of the numbers $r(G)$ for general graphs. 

One such direction that has become fundamental in its own right is that of 
estimating Ramsey numbers for various types of sparse graphs.
In 1975, Burr and Erd\H{o}s \cite{BE75} posed the problem of showing that every
graph $G$ with $n$ vertices and maximum degree $\Delta$ satisfied $r(G) \leq
c(\Delta) n$, where the constant $c(\Delta)$ depends only on $\Delta$. That this is indeed
the case was shown by Chv\'atal, R\"{o}dl, Szemer\'edi and Trotter
\cite{CRST83} in one of the earliest applications of Szemer\'edi's celebrated
regularity lemma \cite{Sz76}. Remarkably, this means that for graphs of fixed
maximum degree the Ramsey number only has a linear dependence on the number of
vertices. However, the use of the regularity lemma only gives a tower-type bound 
on $c(\Delta)$, showing that $c(\Delta)$ is at most an exponential tower of 2s with a 
height that is itself exponential in $\Delta$.

A remarkable new approach to this problem, which avoids the use of any regularity lemma, was developed by
Graham, R\"{o}dl and Ruci\'nski \cite{GrRoRu}. Their proof shows that
that the function $c(\Delta)$ can be taken to be $2^{c \Delta \log^2 \Delta}$ for some 
absolute constant $c$. For bipartite graphs, Graham, R\"{o}dl and Ruci\'nski \cite{GRR01} did even better, showing
that if $G$ is a bipartite graph with $n$ vertices and maximum degree $\Delta$
then $r(G) \leq 2^{c \Delta \log \Delta} n$. They also constructed bipartite graph with $n$ vertices, maximum degree $\Delta$ and
Ramsey number at least $2^{c' \Delta} n$, providing a lower bound on $c(\Delta)$.
Together with Conlon and Fox \cite{C07, FS07, CFS}, we recently further improved these results. Removing the $\log
\Delta$ factor in the exponents, we proved that for general graphs $c(\Delta) \leq 2^{c \Delta \log \Delta}$.
In the bipartite case we achieved an essentially best possible estimate, showing that $r(G) \leq 2^{c \Delta} n$.

Another (somewhat related) problem on Ramsey numbers of general graphs was posed in 1973 by Erd\H{o}s and Graham. Among all graphs with $m$ edges, they wanted to find 
the graph $G$ with maximum Ramsey number. Since, the results we mention so far clearly show that sparse graphs have slowly growing Ramsey numbers, one would 
probably like to make such a $G$ as dense as possible. Indeed, Erd\H{o}s and Graham \cite{EG} conjectured that among all the graphs with $m={n \choose 2}$ edges (and no 
isolated vertices), the complete graph on $n$ vertices has the largest Ramsey number. This conjecture is very difficult and so far there has been no progress on this 
problem. Motivated  by this lack of progress, in the early 80s Erd\H{o}s \cite{E1} (see also \cite{CG98}) asked whether one could at least show that the Ramsey number of 
any graph with $m$ edges is not substantially larger than that of the complete graph with the same size. Since the number of vertices in a complete graph with $m$ edges is a constant 
multiple of $\sqrt{m}$, Erd\H{o}s conjectured that $r(G) \leq  2^{c\sqrt{m}}$ for every graph $G$ with $m$ edges and no isolated vertices. 
Together with Alon and Krivelevich \cite{AlKrSu} we showed that for all graphs with $m$ edges $r(G) \leq  2^{c\sqrt{m}\log m}$
and also proved this conjecture in the special case when $G$ is bipartite. In this paper we  
establish Erd\H{o}s' conjecture in full generality.

\begin{theorem}\label{main}
If $G$ is a graph on $m$ edges without isolated vertices, then $r(G) \leq 2^{250\sqrt{m}}$. 
\end{theorem}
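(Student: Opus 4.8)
The plan is to reduce Theorem~\ref{main} to a single embedding statement about \emph{degenerate} graphs and then to produce the required host structure inside an arbitrary coloring by a dependent random choice argument. First I would record two elementary consequences of $G$ having $m$ edges and no isolated vertices: the number of vertices satisfies $n \le 2m$, and the degeneracy of $G$ is at most $\sqrt{2m}$, since any subgraph of minimum degree $\delta$ has at least $\binom{\delta+1}{2}$ edges, so $\delta(\delta+1)/2 \le m$. Set $d := \lceil \sqrt{2m}\,\rceil$ and fix a degeneracy ordering $v_1,\dots,v_n$ in which each $v_i$ has at most $d$ neighbors among $v_1,\dots,v_{i-1}$.

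The ordering gives the following greedy principle: if $\Gamma$ is a graph on a vertex set $A$ in which every set of at most $d$ vertices has at least $n$ common neighbors inside $A$, then $\Gamma \supseteq G$. Indeed, placing $v_1,\dots,v_n$ in order, when we embed $v_i$ its already-embedded neighbors form a set of at most $d$ images in $A$ whose common neighborhood has size at least $n$; since fewer than $n$ vertices have been used, a valid image remains. Thus the theorem reduces to producing, in every red--blue coloring of $K_N$ with $N = 2^{250\sqrt m}$, a monochromatic graph on at least $n$ vertices with this common-neighborhood property. To build it I would apply dependent random choice to the majority color, say red, whose density is at least $1/2$. Sampling a set $T$ of $t := 2d$ vertices uniformly and independently and letting $A$ be the vertices joined in red to all of $T$, convexity gives $\mathbb{E}|A| \ge N2^{-t}$, which for $N = 2^{250\sqrt m}$ vastly exceeds $n$. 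Calling a $d$-set \emph{bad} if it has fewer than $n$ common red neighbors, the expected number of bad sets inside $A$ is at most $\binom{N}{d}(n/N)^t \le (n^2/N)^d$, and since $n^2 \le 4m^2 \ll N$ this is below $1$; deleting one vertex per surviving bad set leaves $A$ of size at least $n$ with the desired property.

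The decisive feature of this computation is that the sample size $t$ is only of order $\sqrt m$ and the error is governed by $n^2/N$ rather than by the number of $d$-subsets treated one at a time. This is precisely what replaces the wasteful union bound responsible for the extraneous $\log m$ factor in the earlier bound $2^{c\sqrt m\log m}$ of Alon, Krivelevich and myself, and it lets all constants be absorbed into the exponent $250$, which is therefore very far from optimal.

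The step I expect to be the main obstacle is guaranteeing that the common neighbors are counted \emph{inside} $A$ and in a \emph{single} color. Dependent random choice most naturally controls common neighborhoods in the whole vertex set, which already suffices when $G$ is bipartite---one side maps into $A$ and the other into its common neighbors---and this is exactly why the bipartite case was accessible earlier. For general, non-bipartite $G$ every image must lie in $A$, so the property is needed internally; moreover an adversary can make a single color fail it outright (for instance a balanced complete bipartite red graph forces every red common neighborhood to be red-independent), so the two colors must be played against one another and the working color chosen according to the outcome of the sampling. Reconciling the deletion argument with the internal common-neighborhood requirement while keeping $|A|\ge n$ and surrendering only a constant factor in the exponent is the technical heart of the proof; once this is in place, the greedy embedding of the first two paragraphs completes the argument.
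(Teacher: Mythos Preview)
Your reduction to a $d$-degenerate embedding with $d=\lceil\sqrt{2m}\,\rceil$ is correct, and the dependent random choice computation you give is the standard one; but the step you yourself label ``the technical heart'' is not a detail to be cleaned up---it is the entire difficulty, and your proposal does not resolve it. What DRC with a sample $T$ of size $2d$ produces is a set $A$ in which every $d$-subset has at least $n$ common red neighbours \emph{in $V(K_N)$}. To embed a non-bipartite $d$-degenerate $G$ greedily you need those common neighbours to lie in $A$, and to themselves enjoy the same property, through all $n$ steps of the embedding. The known ways to force this---iterating DRC inside $A$, or deleting vertices so that common neighbourhoods persist internally---each cost a factor $2^{-\Theta(d)}$ per round and need $\Theta(\log n)$ rounds, which restores precisely the extra $\log m$ in the exponent that the theorem is meant to remove. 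Your complete-bipartite example already shows that no single colour need carry the internal property, so one must indeed play the colours off against each other; but you give no mechanism for doing so, and no DRC argument currently known does it without the logarithmic loss. (In fact, a DRC proof of your internal common-neighbourhood statement would essentially give $r(H)\le 2^{O(d)}n$ for $d$-degenerate $H$, a strong form of the Burr--Erd\H{o}s conjecture that is still open.)

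The paper does not close this gap; it avoids it by a different scheme. It works with \emph{monochromatic pairs} $(X,Y)$---a clique $X$ with all $X$--$Y$ edges of the same colour---and iteratively amplifies $|X|$. Given such a pair in red, delete from $G$ the $|X|$ vertices of largest degree; the remainder $G'$ has maximum degree $O(m/|X|)$. If $G'$ has no red copy in $Y$, the Graham--R\"odl--Ruci\'nski machinery yields a large $S\subseteq Y$ of very small red density, and an Erd\H{o}s--Szemer\'edi argument inside $S$ then produces a new monochromatic pair $(X',Y')$ with $|X'|$ much larger than $|X|$ and $|Y'|$ only a bounded factor smaller than $|Y|$. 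Because $|X|/\sqrt m$ grows super-exponentially, the successive losses in $|Y|$ form a convergent series totalling $O(\sqrt m)$ in the exponent; after finitely many steps $|X'|\ge 2m\ge |V(G)|$ and $G$ embeds into the monochromatic clique $X'$. This density-increment iteration, not dependent random choice, is what supplies the missing internal structure.
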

This theorem is clearly best possible up to a constant factor in the exponent, since the result of
Erd\H{o}s (mentioned above) shows that a complete graph with $m$ edges has Ramsey number at least $2^{\sqrt{m/2}}$.

The rest of this short paper is organized as follows. In the next section we present several extensions of 
the well know results which will be our main tools in establishing Theorem \ref{main}.
The proof of this theorem appears in Section 3.  The last section of the paper contains
some concluding remarks and open questions. Throughout the paper, we systematically omit floor and ceiling signs whenever they are not crucial for the sake of clarity of presentation. All
logarithms are in the base $2$. We also do not make any serious attempt to optimize absolute constants in our statements and proofs.

\section{Monochromatic pairs and other tools}
In this section we develop the machinery which we use to establish Theorem \ref{main}. 
We need the following important definition.

\begin{definition}
In an edge-coloring of $K_N$, we call an ordered pair $(X,Y)$ of disjoint subsets of vertices monochromatic if all edges in 
$X \cup Y$ incident to a vertex in $X$ have the same color.
\end{definition}

Our proof has several ingredients, including extensions of two well known results to monochromatic pairs.
The first uses the original argument of Erd\H{o}s and
Szekeres~\cite{ES35} to show how to find such a pair in every $2$-edge-coloring of a complete graph.

\begin{lemma}\label{EESzek}
For all $k$ and $\ell$, every $2$-edge-coloring of $K_N$, contains a monochromatic pair $(X,Y)$ 
with $$|Y| \geq {k+\ell \choose k}^{-1}N-k-\ell$$ which is red and has $|X|=k$ or is  blue with $|X|=\ell$.
\end{lemma}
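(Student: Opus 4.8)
The plan is to prove the lemma by induction on $k+\ell$, mimicking the classical Erd\H{o}s--Szekeres argument but carrying along a large common-neighborhood set alongside the monochromatic clique being built. The base cases are $k=0$ and $\ell=0$. When $k=0$ I take $X=\emptyset$ and $Y$ to be the entire vertex set: this is vacuously a red monochromatic pair with $|X|=0$ and $|Y|=N\ge \binom{\ell}{0}^{-1}N-\ell$, as needed. The case $\ell=0$ is symmetric with a blue pair.

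For the inductive step I assume $k,\ell\ge 1$ and pick an arbitrary vertex $v$. Let $R$ and $B$ be its red- and blue-neighborhoods, so that $|R|+|B|=N-1$. Since Pascal's identity gives $\binom{k+\ell-1}{k-1}+\binom{k+\ell-1}{k}=\binom{k+\ell}{k}$, a pigeonhole argument shows that either $|R|\ge \frac{\binom{k+\ell-1}{k-1}}{\binom{k+\ell}{k}}(N-1)$ or $|B|\ge \frac{\binom{k+\ell-1}{k}}{\binom{k+\ell}{k}}(N-1)$, because if both failed strictly their sum would be less than $N-1$.

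Consider the first case, the second being entirely symmetric. Applying the induction hypothesis to the coloring induced on $R$ with parameters $(k-1,\ell)$ produces a monochromatic pair $(X',Y')$ inside $R$ with $|Y'|\ge \binom{k+\ell-1}{k-1}^{-1}|R|-(k-1)-\ell$ that is either blue with $|X'|=\ell$ or red with $|X'|=k-1$. In the blue case I simply output $(X',Y')$ unchanged. In the red case I set $X=X'\cup\{v\}$ and $Y=Y'$. The key structural observation is that, since $X',Y'\subseteq R$, every edge joining $v$ to $X'\cup Y'$ is red; hence adjoining $v$ keeps all edges incident to $X$ within $X\cup Y$ red and raises $|X|$ to exactly $k$, so that $(X,Y)$ is a valid red monochromatic pair. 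This is precisely the point where the greedy construction builds a clique \emph{together with} its monochromatic common neighborhood, which is exactly what a monochromatic pair records.

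It remains to verify the size bound, and this is where the additive slack $-k-\ell$ is spent. Substituting the lower bound on $|R|$ and using $\binom{k+\ell-1}{k-1}^{-1}\cdot\frac{\binom{k+\ell-1}{k-1}}{\binom{k+\ell}{k}}=\binom{k+\ell}{k}^{-1}$, I get $|Y|=|Y'|\ge \binom{k+\ell}{k}^{-1}(N-1)-(k-1)-\ell$, and since $\binom{k+\ell}{k}^{-1}\le 1$ this is at least $\binom{k+\ell}{k}^{-1}N-k-\ell$. I expect no genuine difficulty beyond this bookkeeping: one must check that the pigeonhole weights telescope through Pascal's identity and that losing one vertex per recursion level is absorbed by incrementing the additive error from $(k-1)+\ell$ to $k+\ell$. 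The conceptual content is entirely contained in the previous paragraph.
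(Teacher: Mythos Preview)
Your proof is correct and follows essentially the same Erd\H{o}s--Szekeres induction as the paper; note that your pigeonhole weights $\binom{k+\ell-1}{k-1}/\binom{k+\ell}{k}$ and $\binom{k+\ell-1}{k}/\binom{k+\ell}{k}$ simplify to $\frac{k}{k+\ell}$ and $\frac{\ell}{k+\ell}$, matching the paper's formulation exactly.
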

\begin{proof}
The proof is by induction on $k+\ell$. The base case when $\min(k,\ell)=0$ is trivial. Let $v$ be an arbitrary vertex of $K_N$. 
Then $v$ has either red degree at least $\frac{k}{k+\ell}(N-1)$ or blue degree at least $\frac{\ell}{k+\ell}(N-1)$. 
If $v$ has red degree at least $\frac{k}{k+\ell}(N-1)$, then by induction its set of red neighbors contains a pair  
$(X,Y)$ with $$|Y| \geq {k-1+\ell \choose k-1}^{-1}\frac{k}{k+\ell}(N-1) -(k-1)-\ell \geq 
{k+\ell \choose k}^{-1}N -k-\ell$$ that is monochromatic blue with $|X|=\ell$ (and then we are done)
or monochromatic red with $|X|=k-1$. In the latter case, we can add $v$ to $X$ to obtain a monochromatic red pair
$(X',Y)$ with $X'=X\cup\{v\}$ and $|X'|=k$. A very similar argument, which we omit, can be used to finish the proof in the case when
$v$ has blue degree at least $\frac{\ell}{k+\ell}(N-1)$.
\end{proof}

Although we still do not know how to improve substantially the upper bound for $r(K_n)$,
Erd\H{o}s and Szemer\'edi \cite{ErSzem} showed that this is possible in the case when one color class in the $2$-edge-coloring of $K_N$ is 
very sparse or very dense. The edge density of a graph $G$ is the fraction of pairs of distinct vertices of $G$ that are
edges. Our next lemma extends the result of Erd\H{o}s and Szemer\'edi to monochromatic pairs.

\begin{lemma}\label{EESzem}
Let $0<\epsilon\leq 1/7$ and let $t$ and $N$ be positive integers satisfying $t \geq \epsilon^{-1}$ and $N \geq t\epsilon^{-14\epsilon t}$. Then
any red-blue edge-coloring of $K_N$ in which red has edge density $\epsilon$ 
contains a monochromatic pair $(X,Y)$ with $|X| \geq t$ and $|Y| \geq \epsilon^{14\epsilon t}N$.
\end{lemma}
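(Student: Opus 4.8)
The plan is to build the desired pair greedily, extending it one vertex at a time while tracking how the \emph{local} red density evolves. I would maintain a current vertex set $W$ (initially all of $V(K_N)$) together with a set $X$ for which $(X,W)$ is already a monochromatic pair, and at each step add one vertex to $X$ while passing to a neighborhood inside $W$ of the appropriate color. Since red is globally sparse, the natural attempt is to grow $X$ entirely in blue: pick a vertex $v\in W$ of minimum red degree inside $W$, add it to $X$, and replace $W$ by its blue neighborhood $N_B(v)\cap W$. This preserves the invariant, because $v\in W\subseteq N_B(X)$ is blue-adjacent to all of the current $X$, and the new $W$ lies inside $N_B(v)$. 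If the red density of $W$ at that moment is $\rho$, then the minimum red degree is at most $\rho|W|$, so the step shrinks $W$ by a factor of roughly $1-\rho$.

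If every one of the $t$ steps is taken this way and the red densities $\rho_0,\dots,\rho_{t-1}$ encountered all stay below a threshold $\rho^\ast$ of order $\epsilon\log(1/\epsilon)$, then the final set has size at least about $N\prod_{i}(1-\rho_i)$, and I would choose the constant $14$ precisely so that
\[
\prod_{i=0}^{t-1}(1-\rho_i)\ \geq\ (1-\rho^\ast)^t\ \geq\ \epsilon^{14\epsilon t}\ =\ \gamma .
\]
The hypotheses $t\geq\epsilon^{-1}$ and $N\geq t\gamma^{-1}$ are exactly what is needed so that, throughout, $|W|\geq\gamma N\geq t$ stays large enough both to keep selecting vertices and to absorb the additive $-1$ losses from removing the chosen $v$. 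I would also note at the outset that when $\epsilon$ is bounded away from $0$ the threshold $\rho^\ast$ is no longer small, but there the target $\gamma=\epsilon^{14\epsilon t}$ is itself smaller than the $4^{-t}$-type quantity produced by the Erd\H{o}s--Szekeres bound of Lemma~\ref{EESzek} (applied with $k=\ell=t$), so that lemma already delivers the pair; the substance of the argument is the regime of small $\epsilon$.

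The difficulty, and the step I expect to be the main obstacle, is that passing to a blue neighborhood can \emph{increase} the red density, so I cannot assume the $\rho_i$ remain below $\rho^\ast$ for all $t$ steps. I would therefore run the process under a density dichotomy: as long as the current red density is below $\rho^\ast$, take a blue step as above; the first time it exceeds $\rho^\ast$, I have produced a set $W$ of size still at least $\gamma N$ in which red is comparatively dense. The crux is to show that this denser-red situation is genuinely easier and to extract from it a pair of the required size — either by reapplying the argument to $W$ with the roles of the two colors exchanged, or, in the moderate-density range, by invoking Lemma~\ref{EESzek} inside $W$, whose exponential loss is affordable precisely because the density is now bounded away from $0$. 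Balancing the threshold $\rho^\ast$ against the constant $14$ in the exponent, and verifying that the accumulated-density bookkeeping never drives $|W|$ below $\gamma N$ before $|X|$ reaches $t$, is where essentially all of the work lies.
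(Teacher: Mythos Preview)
Your greedy framework is natural, and the blue-step phase does give the right $\epsilon^{O(\epsilon t)}$ loss while the local red density stays below a threshold $\rho^\ast\asymp\epsilon\log(1/\epsilon)$. The gap is in what happens when the density crosses $\rho^\ast$, and neither of your two proposed continuations works. Exchanging the colors is a non-starter: at the moment of crossing the red density in $W$ is only about $\rho^\ast$, which still tends to $0$ with $\epsilon$, so red remains the sparse color and the symmetric argument gains you nothing (and if instead you start taking \emph{red} steps, each shrinks $W$ by a factor of only about $\rho^\ast$, which is catastrophic). Invoking Lemma~\ref{EESzek} with $k=\ell=t$ is also too weak: that lemma is completely insensitive to density and always costs a factor $\binom{2t}{t}^{-1}\approx 4^{-t}$, whereas for small $\epsilon$ one has $\epsilon^{14\epsilon t}\gg 4^{-t}$; nothing in the event ``red density exceeded $\rho^\ast$'' makes that loss affordable. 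The obstruction is genuine: after a few blue steps the red density inside $W$ can sit anywhere in $[\rho^\ast,1]$, and your process has no mechanism preventing $\prod_i(1-\rho_i)$ from degenerating to something like $2^{-t}$ before $|X|$ reaches $t$. This is not a bookkeeping issue; a new idea is required.

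The paper sets up a different, structural, dichotomy that \emph{does} make the alternative case easy. First pass to a set $S$ of at least $N/2$ vertices each of red degree at most $\epsilon N$, and let $B$ be a maximum blue clique in $S$ (capped at size $2t$). Since every vertex of $S$ has few red neighbors in $B$, a pigeonhole on the pattern of these red neighborhoods yields a subset $R\subset B$ with $|R|=3\epsilon|B|$ and a set $S_R$ of size at least $\epsilon^{O(\epsilon|B|)}N$ in which every vertex is blue to all of $B\setminus R$. If $|B|=2t$ this already gives the blue pair $(B\setminus R,\,S_R)$. If $|B|<2t$, then by maximality of $B$ there is no blue clique of size $|R|+1\le 7\epsilon t$ inside $S_R$, and this small bound on the \emph{blue clique number} is precisely what makes Lemma~\ref{EESzek} efficient: applied inside $S_R$ with $k=t$ and $\ell=7\epsilon t$, it loses only $\binom{t+\ell}{\ell}^{-1}\approx \epsilon^{O(\epsilon t)}$ and produces the red pair. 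The dichotomy that drives the proof, then, is on the blue clique number rather than on the local red density; the idea your plan is missing is a way to force the blue clique number down to $O(\epsilon t)$ in the case where the direct blue pair is unavailable.
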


\begin{proof}
As long as there is a vertex whose red degree is still at least $\epsilon N$ delete it. Since the number of red edges is at most $\epsilon N^2/2$, we have deleted at most $N/2$ vertices. 
Let $S$ denote the set of remaining vertices, so $|S| \geq N/2$ and every vertex in $S$ has red degree at most $\epsilon N$.

If $S$ does not contain a blue clique of size $2t$, let $B$ be a maximum blue clique in $S$. Otherwise, let $B$ be a blue clique in $S$ of size $2t$. 
Delete all vertices of $S \setminus B$ which have at least $3\epsilon |B|$ red neighbors in $B$, and let $S'$ denote the set of remaining vertices. Since every vertex in $B$ has 
red degree at most $\epsilon N$, there are at most $\epsilon N|B|$ red edges from $B$ to $S$, and hence the number of deleted vertices from $S \setminus B$ is at 
most $\frac{\epsilon N|B|}{3\epsilon |B|}= N/3$. Using that $7^{14}t \leq N$, we have $|S'| \geq |S \setminus B|-N/3 \geq N/2-2t-N/3 \geq N/7$. For each 
subset $R \subset B$ of size $3\epsilon |B|$, let $S_R$ denote the set of vertices in $S'$ whose set of red neighbors in $B$ is contained in $R$.

Note that $S'$ is the union of the sets $S_R$, as each vertex in $S'$ has at most $3\epsilon|B|$ red neighbors in $B$.
Since there are ${|B|\choose 3\epsilon |B|}$ such sets, by the pigeonhole principle we have that there is $R$ for which 
$$|S_R| \geq {|B|\choose 3\epsilon |B|}^{-1} |S'| \geq \left(\frac{e}{3\epsilon}\right)^{-3\epsilon|B|} N/7\geq \epsilon^{3\epsilon|B|}N/7\,,$$
where we used the well known fact that ${a \choose b} \leq (ea/b)^b$.

If $|B|=2t$, then let $X=B \setminus R$ and $Y=S_R$. Note that, by definition of $S_R$, all the edges between $B \setminus R$ and $S_R$  are blue. This gives us the monochromatic blue pair $(X,Y)$ with 
$|X| \geq (1-3\epsilon)|B| \geq |B|/2=t$ and 
$|Y| \geq \epsilon^{6\epsilon t}N/7 \geq \epsilon^{14\epsilon t}N$, so we are done. Hence, suppose that $|B|<2t$. 
In this case, note that, there is 
no blue clique of size $|R|+1$ in $S_R$. Indeed, such a blue clique $Q$ in $S_R$ together with $(B \setminus R)$ would form a blue clique in $S$ of size larger than $|B|$, 
contradicting the maximality of $B$. Apply Lemma \ref{EESzek} with $k=t$ and $\ell=7 \epsilon t \geq 3\epsilon |B|+1=|R|+1$ to the coloring restricted to $S_R$.
Since $S_R$ has no blue clique of size $\ell$, it contains a monochromatic red pair 
$(X,Y)$ with $|X|=t$ and 
\begin{eqnarray*}
|Y| &\geq& {t+\ell \choose \ell}^{-1}|S_R|-t-\ell \geq
\left(\frac{\ell}{e(t+\ell)}\right)^{\ell}|S_R|-2t \geq
(1.2\epsilon)^{7\epsilon t}|S_R|-2t\\
&\geq& \frac{1.2^7}{7}\epsilon^{7\epsilon t} \epsilon^{3\epsilon|B|}N-2t \geq 
\frac{1}{2}\epsilon^{13\epsilon t}N-2\epsilon^{14\epsilon t}N \geq \left(\frac{1}{2}-2\epsilon \right) \epsilon^{13\epsilon t}N \geq \epsilon^{14\epsilon t}N, 
\end{eqnarray*}
where we used that $\epsilon \leq 1/7,~ \frac{\ell}{e(t+\ell)}=\frac{7\epsilon}{e(1+7\epsilon)} \geq 1.2\epsilon,~ 1.2^7 \geq 3.5 ,~ t \leq \epsilon^{-1},~ 2t \leq 2\epsilon^{14\epsilon t}N \leq 
2\epsilon \cdot \epsilon^{13\epsilon t}N$.
This completes the proof of the lemma. 
\end{proof}

Finally, we need some tools developed by
Graham, R\"odl  and Ruci\'nski \cite{GrRoRu} to study the Ramsey numbers of sparse graphs
(see also \cite{FS} for simpler proofs and generalizations). We start with some
notation. Let $H$ be a graph with vertex set $V$ and let $U$ be a
subset of $V$. Then we denote by $H[U]$ the subgraph of $H$ induced by $U$ and by
$e(U)$ its number of edges. 
The {\em edge density} $d(U)$ of $U$ is defined by
$$d(U)=\frac{e(U)}{{|U| \choose 2}}.$$ 
Similarly, if $X$ and $Y$ are two disjoint subsets of $V$, then
$e(X,Y)$ is the number of edges of $H$ adjacent to exactly one vertex from
$X$ and one from $Y$ and the density of the pair $(X,Y)$ is defined by
$$d(X,Y)=\frac{e(X,Y)}{|X||Y|}.$$
We say that $H$ is {\em $(\rho, \epsilon)$-sparse} if 
there is a pair of disjoint subsets  $X, Y \subset V$ with $|X|=|Y| \geq \rho|V|$ and 
$d(X,Y)\leq \epsilon$. The following lemma was proved in \cite{GrRoRu} (see Lemma 2). It shows that
if the density between every two sufficiently small disjoint subsets of $H$ is at least $\epsilon$, then $H$ contains 
every bounded degree graph $G$ of order proportional to $V(H)$.

\begin{lemma}
\label{l23}
Let $\Delta$ and $n$ be two integers, $\epsilon \leq 1/2$ and $\rho=\epsilon^{\Delta}/(\Delta+1)$.
Let also $G$ be a graph on $n$ vertices with maximum degree at most $\Delta$. If $H$ is a graph of order at least
$(\Delta+1)\epsilon^{-\Delta}n$ which contains no copy of $G$ then $H$ 
is $(\rho, \epsilon)$-sparse.
\end{lemma}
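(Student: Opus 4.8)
The plan is to prove the contrapositive via an embedding argument: assume $H$ (on $N \geq (\Delta+1)\epsilon^{-\Delta}n$ vertices) is \emph{not} $(\rho,\epsilon)$-sparse, meaning every pair of disjoint subsets $X,Y$ with $|X|=|Y|\geq \rho N$ has density $d(X,Y) > \epsilon$, and then show $H$ must contain a copy of $G$. I would embed the vertices of $G$ one at a time, in an arbitrary order $v_1,\dots,v_n$, maintaining for each not-yet-embedded vertex $v_j$ a nonempty ``candidate set'' $C_j \subseteq V(H)$ of vertices that are adjacent in $H$ to all already-placed neighbors of $v_j$. The key quantitative invariant to carry through the induction is that each candidate set stays large, specifically $|C_j| \geq \epsilon^{d_j} N$ where $d_j$ is the number of neighbors of $v_j$ already embedded; since $d_j \leq \Delta$, this guarantees $|C_j| \geq \epsilon^{\Delta} N \geq \rho N > 0$ throughout, so there is always a legal vertex to embed next.

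\medskip\noindent\textbf{Key steps.} First I would set up the greedy embedding with candidate sets and state the invariant $|C_j| \geq \epsilon^{d_j}N$. The heart of the argument is the induction step: when we embed $v_i$ into some vertex $w \in C_i$, each still-unembedded neighbor $v_j$ of $v_i$ has its candidate set updated to $C_j \cap N_H(w)$, and we must show $w$ can be chosen so that all these updated sets remain large enough to preserve the invariant (that is, $|C_j \cap N_H(w)| \geq \epsilon |C_j|$ for every such $j$ simultaneously). This is where I would invoke the non-sparseness hypothesis. The natural way to force a single good $w$ to work for all neighbors at once is to show that the number of ``bad'' vertices $w \in C_i$ -- those failing $|C_j \cap N_H(w)| \geq \epsilon|C_j|$ for at least one neighbor $v_j$ -- is a strict minority of $C_i$. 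For a fixed neighbor $v_j$, the set $B_{i,j}$ of $w\in C_i$ with few neighbors in $C_j$ has low edge density with $C_j$; if $|B_{i,j}|$ were at least $\rho N$ we could (after balancing cardinalities, taking equal-size subsets of $B_{i,j}$ and $C_j$) produce a disjoint pair of size $\geq \rho N$ with density $\leq \epsilon$, contradicting non-sparseness. Hence each $|B_{i,j}| < \rho N = \epsilon^{\Delta}N/(\Delta+1)$. Summing over the at most $\Delta$ unembedded neighbors of $v_i$, the total number of bad choices is less than $\Delta\cdot \epsilon^{\Delta}N/(\Delta+1) < \epsilon^{\Delta}N \leq |C_i|$, so a good $w$ exists.

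\medskip\noindent\textbf{Main obstacle.} The delicate point is the density-versus-sparseness conversion, because non-sparseness is stated only for \emph{equal-size} disjoint subsets $X,Y$ with $|X|=|Y|\geq \rho N$, whereas $B_{i,j}$ and $C_j$ need be neither equal in size nor disjoint. The candidate sets are disjoint from each other by construction, but $B_{i,j}\subseteq C_i$ and $C_j$ live in disjoint candidate sets so disjointness is fine; the genuine care is in the size-matching. I would handle this by passing to equal-size subsets: if $|B_{i,j}|\geq \rho N$ then since $|C_j|\geq \rho N$ as well, I can pick subsets $X\subseteq B_{i,j}$, $Y\subseteq C_j$ of common size $\lceil \rho N\rceil$ with $d(X,Y)\leq \epsilon$ (the sparse inequality is inherited by subsets of a low-density pair up to the averaging implicit in the density bound), directly contradicting the hypothesis. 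Once this conversion is pinned down cleanly, the rest is a routine bookkeeping induction, and the final head-count $\Delta\rho N<\epsilon^\Delta N\leq|C_i|$ closes the argument.
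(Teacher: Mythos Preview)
The paper does not actually prove this lemma; it is quoted from Graham--R\"odl--Ruci\'nski \cite{GrRoRu}, and your greedy embedding with shrinking candidate sets is precisely their method, so the overall strategy is the right one.

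There is, however, a genuine gap at the point you yourself flag as the main obstacle. You write that ``the candidate sets are disjoint from each other by construction,'' but in the scheme you describe this is false: before anything is embedded every $C_j$ equals $V(H)$, and in general there is no reason for $C_i$ and $C_j$ to be disjoint even when $v_iv_j\in E(G)$. Since the $(\rho,\epsilon)$-sparse definition in the paper requires a pair of \emph{disjoint} equal-size sets, you cannot legitimately pass from ``$|B_{i,j}|\ge\rho N$'' to a sparse pair. The standard repair (and the one used in \cite{GrRoRu}) is to fix a proper $(\Delta+1)$-colouring of $G$ and split $V(H)$ into $\Delta+1$ parts of size $N/(\Delta+1)$, one per colour class, with each $v_j$ to be embedded only inside its own part. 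Adjacent vertices of $G$ receive different colours, so $C_i$ and $C_j$ (and hence $B_{i,j}$ and $C_j$) automatically lie in different parts and are disjoint. This is also where the factor $1/(\Delta+1)$ in $\rho=\epsilon^{\Delta}/(\Delta+1)$ enters naturally: the invariant becomes $|C_j|\ge \epsilon^{d_j}N/(\Delta+1)\ge\rho N$, and the final count uses that $v_i$ has at most $\Delta-d_i$ forward neighbours together with $(1/\epsilon)^{\Delta-d_i}>\Delta-d_i$ (here $\epsilon\le 1/2$ is used). You should also explicitly subtract the at most $n-1$ already-used images when selecting $w$; the hypothesis $N\ge(\Delta+1)\epsilon^{-\Delta}n$ leaves exactly enough room for this.
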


Note that if the  graph $H$ contains no copy of graph $G$ then after finding one sparse pair $(X,Y)$ one can apply the last lemma again to the subgraphs of $H$ induced by sets $X$ and $Y$.
By doing this recursively, it was proved in \cite{GrRoRu} and \cite{FS} that $H$ contains a sparse subset. We use the following statement from \cite{FS}
(see Corollary 3.4).

\begin{lemma}
\label{l24}
Let $0 \leq \epsilon, \rho \leq 1$, $h=\log (2/\epsilon)$ and let $H=(V,E)$ be a graph such that for every subset $U$ of $H$ of size
at least $(\rho/2)^{h-1}|V|$ the induced subgraph $H[U]$ is $(\rho, \epsilon/8)$-sparse. Then $H$ contains a subset $S, |S| \geq 2\rho^h|V|$ 
with edge density $d(S) \leq \epsilon$.
\end{lemma}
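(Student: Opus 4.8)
The plan is to build the required set $S$ by merging ``sparse pairs'' along a binary tree of depth $h$, arranged so that the internal edge density is essentially halved at each of the $h$ levels. Concretely, I would grow a rooted binary tree whose nodes are vertex subsets of $V$: the root is $V$ itself, and each node $U$ that is still large enough is split into two children by the hypothesis — since $H[U]$ is $(\rho,\epsilon/8)$-sparse, it contains a pair $(A,B)$ with $|A|=|B|\ge \rho|U|$ and $d(A,B)\le \epsilon/8$, and I take $A$ and $B$ (suitably truncated, see below) as the two children, which are disjoint by definition. Stopping at depth $h$, I let $L_1,\dots,L_{2^h}$ be the leaves and set $S=\bigcup_k L_k$.

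To keep the bookkeeping clean I would first equalize sizes level by level: letting $N_j$ be the minimum half-size produced at level $j$, I truncate every level-$j$ set to size exactly $N_j$, so that all $2^h$ leaves share a common size $s=N_h$ and $|S|=2^h s$. Since each split loses at most a factor $\rho$, one gets $N_j\ge \rho^j|V|$; in particular every internal node (depth $\le h-1$) has size at least $\rho^{h-1}|V|\ge(\rho/2)^{h-1}|V|$, which is exactly the size threshold needed to apply the hypothesis at each split, while the leaves give $|S|=2^hN_h\ge (2\rho)^h|V|\ge 2\rho^h|V|$, as required.

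The heart of the argument is the density estimate. I would split the edges inside $S$ into those lying within a single leaf and the ``cross'' edges joining two distinct leaves, grouping the latter according to the tree merge at which the two leaves first separate. Each leaf contributes at most $\binom{s}{2}$ internal edges, for a total of at most $2^{h-1}s^2$; since $\binom{|S|}{2}$ is about $2^{2h-1}s^2$, this accounts for a density of roughly $2^{-h}=\epsilon/2$. For the cross edges, a merge at depth $j$ joins two subtrees sitting inside the two sides $A,B$ of a pair with $d(A,B)\le\epsilon/8$, and summing the resulting geometric series over the $2^j$ merges at each depth $j=0,\dots,h-1$ contributes a density of at most $\epsilon/8$. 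Together these yield $d(S)\le \epsilon/2+\epsilon/8<\epsilon$ — which is precisely why the hypothesis is stated with sparsity parameter $\epsilon/8$ rather than $\epsilon$.

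The one genuine obstacle is that, after truncating a side $A$ of a sparse pair to a smaller subset, the density of the pair between the resulting subtrees need not stay below $\epsilon/8$: passing to subsets can concentrate the few cross edges. I expect this to be the step requiring the most care, and I would resolve it probabilistically. If every truncation is performed by selecting a uniformly random subset of the prescribed size, then the two subtrees feeding any given merge become independent exchangeable subsets of $A$ and $B$ of deterministic sizes, so the expected number of cross edges there equals $d(A,B)$ times the product of the subtree sizes, hence at most $\epsilon/8$ times that product. As $|S|$ is deterministic, the estimate above bounds $\mathbf{E}[d(S)]<\epsilon$, and therefore some choice of truncations produces a set $S$ with $d(S)\le\epsilon$ and $|S|\ge 2\rho^h|V|$. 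The delicate point to verify is exactly that these random subtrees are genuinely exchangeable within each side and independent across the two sides of every merge.
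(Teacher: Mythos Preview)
The paper does not prove this lemma; it is quoted from \cite{FS} (Corollary~3.4 there). Your binary-tree construction of iterated sparse pairs is the right skeleton---it is essentially how the argument in \cite{FS} is organised---and you have correctly isolated the single genuine difficulty: once one passes to subtrees $S_A\subset A$ and $S_B\subset B$, the bound $d(A,B)\le\epsilon/8$ no longer controls $d(S_A,S_B)$.

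The gap is in your proposed fix. The claim that under random truncation ``the two subtrees \dots\ become independent \emph{exchangeable} subsets of $A$ and $B$'' is not correct. Independence across the two sides of a merge can be arranged (fix the level sizes in advance, say $N_j=\lfloor\rho^j|V|\rfloor$, so that the randomness used inside the $A$-subtree and inside the $B$-subtree decouple). But $S_A$ is \emph{not} exchangeable in $A$: after the first uniform step $A\supset A'$, the remainder of the construction of $S_A$---finding a sparse pair in $A'$, truncating again, and so on---is driven by the edge structure inside $A'$, so $\Pr[a\in S_A]$ genuinely depends on the vertex $a\in A$. Nothing prevents the few cross edges between $A$ and $B$ from sitting on exactly those $a\in A$ for which $\Pr[a\in S_A]$ is large, and then the identity $\mathbb{E}\big[e(S_A,S_B)\big]=d(A,B)\,|S_A|\,|S_B|$ that your argument relies on fails. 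The crude bound that \emph{is} always available, $e(S_A,S_B)\le e(A,B)\le(\epsilon/8)|A|^2$, is too weak here, since $|A|/|S_A|$ can be as large as $\rho^{-(h-j-1)}$, which swamps the factor $2^{h-j-1}$ you would need. So the expectation argument, as written, does not close; the proof in \cite{FS} handles the cross-edge bookkeeping by a different device, and you would need to consult it (or supply an alternative) to complete this step.
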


Combining these two lemmas we obtain the last ingredient, which we need for the proof of our main result.

\begin{corollary} 
\label{GRR}
Let $G$ be a graph with $n$ vertices, maximum degree $\Delta$ and let $\epsilon \leq 1/8$. 
If $H$ has $N \geq \epsilon^{4\Delta\log \epsilon}n$ vertices and does not contain a copy of $G$, then it has a 
subset $S$ of size $|S| \geq \epsilon^{-4\Delta \log \epsilon}N$ with edge density $d(S) \leq \epsilon$.
\end{corollary}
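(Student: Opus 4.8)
The plan is to feed Lemma \ref{l23} into Lemma \ref{l24}. Set $\rho=(\epsilon/8)^{\Delta}/(\Delta+1)$ and $h=\log(2/\epsilon)$, so that these parameters match Lemma \ref{l24} applied with our $\epsilon$ and Lemma \ref{l23} applied with $\epsilon/8$ in the role of its density parameter. The factor $8$ is precisely the discrepancy between the sparsity parameter $\epsilon$ appearing in the conclusion of Lemma \ref{l23} and the parameter $\epsilon/8$ demanded in the hypothesis of Lemma \ref{l24}; note that $\epsilon/8\le 1/64\le 1/2$, so Lemma \ref{l23} is applicable.

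First I would verify the hypothesis of Lemma \ref{l24}: that every subset $U\subseteq V(H)$ with $|U|\geq(\rho/2)^{h-1}N$ induces a $(\rho,\epsilon/8)$-sparse subgraph $H[U]$. Since $H$ contains no copy of $G$, neither does $H[U]$; hence by Lemma \ref{l23} (applied with $\epsilon/8$) it suffices to know that $|U|\geq(\Delta+1)(\epsilon/8)^{-\Delta}n$. Thus the only thing to check at this step is the threshold inequality $(\rho/2)^{h-1}N\geq(\Delta+1)(\epsilon/8)^{-\Delta}n$, which will follow from the assumed bound $N\geq\epsilon^{4\Delta\log\epsilon}n$ once we confirm $\epsilon^{4\Delta\log\epsilon}\geq(2/\rho)^{h-1}(\Delta+1)(\epsilon/8)^{-\Delta}$. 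With the hypothesis established, Lemma \ref{l24} produces a subset $S$ with $d(S)\leq\epsilon$ and $|S|\geq 2\rho^h N$, and it remains only to show $2\rho^h\geq\epsilon^{-4\Delta\log\epsilon}$, so that the claimed bound $|S|\geq\epsilon^{-4\Delta\log\epsilon}N$ follows.

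Both remaining inequalities are pure bookkeeping of exponents, and this is where the main (if routine) effort lies. Taking logarithms and using $h=1-\log\epsilon$ together with $\log(\epsilon/8)=\log\epsilon-3$, each reduces to a statement of the form ``a positive multiple of $\Delta(\log\epsilon)^2$ dominates the lower-order terms'', the lower-order terms being multiples of $\Delta\log\epsilon$, of $\Delta$, and the contributions of the type $h\log(\Delta+1)$ arising from the factor $(\Delta+1)^{-1}$ in $\rho$. The crucial input is that $\epsilon\leq 1/8$ forces $\log\epsilon\leq-3$, so the quadratic term $3\Delta(\log\epsilon)^2$ generated when comparing $2\rho^h$ (respectively $(2/\rho)^{h-1}$) against the target powers of $\epsilon$ is large and positive; concretely $3(\log\epsilon)^2+4\log\epsilon-3\geq 12$ on the relevant range. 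Combined with the trivial bound $\log(\Delta+1)\leq\Delta$, this quadratic term overwhelms every negative contribution and yields both inequalities.

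I expect no conceptual obstacle here: once the parameters $\rho$ and $h$ are aligned as above, the corollary is an immediate consequence of the two lemmas. The only delicate point is the arithmetic of tracking the factor $8$ and the $(\Delta+1)$ terms through the exponent estimates, which is the fiddliest part of the verification but is ultimately controlled by the single observation that $|\log\epsilon|\geq 3$.
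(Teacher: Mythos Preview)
Your proposal is correct and follows essentially the same route as the paper: set $\rho=(\epsilon/8)^{\Delta}/(\Delta+1)$ and $h=\log(2/\epsilon)$, verify via Lemma~\ref{l23} (with $\epsilon/8$) that every $U$ of size at least $(\rho/2)^{h-1}N$ is $(\rho,\epsilon/8)$-sparse, then apply Lemma~\ref{l24} and check the two exponent inequalities using $|\log\epsilon|\geq 3$. The paper phrases the threshold check as $n\leq(\rho/2)^hN$ (so that $|U|\geq\rho^{-1}n$), which is the same computation you outline.
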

\begin{proof}
Let $\rho=(\epsilon/8)^{\Delta}/(\Delta+1)$, $h=\log (2/\epsilon)$ and let $U$ be a subset of $H$ of size 
$(\rho/2)^{h-1}N$. Using that $\epsilon \leq 1/8$  it is rather straightforward to check that $n \leq (\rho/2)^h N$ and therefore 
$|U|  \geq \rho^{-1} n =(\Delta+1) (\epsilon/8)^{-\Delta}n$. Moreover the induced subgraph $H[U]$ contains no copy of $G$, 
and therefore satisfies the conditions of Lemma \ref{l23} (with $\epsilon/8$ instead of $\epsilon$). Therefore $H[U]$ is 
$(\rho, \epsilon/8)$-sparse. Thus we can apply Lemma \ref{l24} to $H$ and find a subset $S, |S| \geq 2\rho^h N $
with density $d(S) \leq \epsilon$. Since $2\rho^h \geq \epsilon^{-4\Delta \log \epsilon}$, this completes the proof.
\end{proof}

\section{Proof of the main result}
We start by describing the idea of the proof. Suppose we have a red-blue edge-coloring of $K_N$ without any monochromatic copy of a certain graph $G$ with 
$m$ edges. Using Lemma \ref{EESzek}, we find a monochromatic pair $(X,Y)$ (say in red), where the  size of $X$ is of order $\sqrt{m}$.
Split the graph $G$ into two parts $A$ and $G'=G\setminus A$, where $A$ is a set of $|X|$ vertices of the largest degree in $G$. Then
the graph $G'$ will have maximum degree bounded by $2m/|X|=O(\sqrt{m})$. Embed $A$ into $X$ and try to find a red copy of $G'$ in $Y$. If $Y$ has no red copy of $G'$, 
use Corollary \ref{GRR} to conclude that it has a relatively large subset $S$ with few red edges. Now we can apply Lemma \ref{EESzem} to $S$ to 
find a new monochromatic pair $(X',Y')$ in blue with the 
following 
property. 
The size of $X'$ will be considerably larger than the size of $X$. On the other hand, the size of 
$Y'$ will not decrease substantially compared with the 
size of $Y$. The proof will follow by repeated application of this procedure, since at some point the size of the 
monochromatic clique $X'$ will be larger than the number of vertices in $G$. The following key lemma gives a precise formulation of the amplification step. 

\begin{lemma}\label{pairslemma} 
Let $G$ be a graph with $m$ edges and without isolated vertices and suppose $27 \leq \alpha \leq \frac{1}{8}\log^3 m$.
If a red-blue edge-coloring of a complete graph on $N$ vertices has no monochromatic copy 
of $G$ and contains a monochromatic pair $(X,Y)$ with $|X| \geq \alpha \sqrt{m}$ and $|Y| \geq 2^{125\alpha^{-1/3}\sqrt{m}}$, 
then it also contains a monochromatic pair $(X',Y')$ with $|X'| \geq 2^{2\alpha^{1/3}}\sqrt{m}$ 
and $|Y'| \geq 2^{-120\alpha^{-1/3}\sqrt{m}}|Y|$.
\end{lemma}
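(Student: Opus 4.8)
The plan is to realize the amplification sketched above by converting the non-existence of a monochromatic $G$ into a structural deficiency that can be fed into Corollary \ref{GRR} and then into Lemma \ref{EESzem}. Assume without loss of generality that the given pair $(X,Y)$ is red, so that $X$ is a red clique and every edge between $X$ and $Y$ is red. Ignoring floors as elsewhere in the paper, let $A$ consist of $\alpha\sqrt{m}$ vertices of largest degree in $G$ and put $G'=G\setminus A$; since the degrees of $G$ sum to $2m$, the $\alpha\sqrt m$-th largest degree is at most $2m/(\alpha\sqrt m)$, so $G'$ has maximum degree $\Delta\le 2\alpha^{-1}\sqrt{m}$ and at most $n'\le n\le 2m$ vertices. (If $G$ had at most $\alpha\sqrt m$ vertices it would already embed into the red clique $X$, a contradiction.) First I would embed $A$ into $X$; as all $X$--$Y$ edges are red, any red copy of $G'$ inside $Y$ would combine with this image of $A$ to give a red copy of all of $G$. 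Hence the coloring restricted to $Y$ contains no red copy of $G'$.

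Next I would apply Corollary \ref{GRR} to the red graph on $Y$, with forbidden graph $G'$ and a density parameter $\epsilon$ to be fixed. This yields a subset $S\subseteq Y$ on which red has density at most $\epsilon$ and with $|S|\ge \epsilon^{-4\Delta\log\epsilon}|Y|=2^{-4\Delta\log^2(1/\epsilon)}|Y|$. On $S$ red is sparse, so I would invoke Lemma \ref{EESzem} (whose proof only uses that the number of red edges is at most $\epsilon|S|^2/2$, i.e.\ red density at most $\epsilon$) with $t=2^{2\alpha^{1/3}}\sqrt{m}$. This produces a monochromatic pair $(X',Y')$ with $|X'|\ge t=2^{2\alpha^{1/3}}\sqrt m$ and $|Y'|\ge \epsilon^{14\epsilon t}|S|$; its colour is immaterial, since the conclusion only asks for some monochromatic pair of these sizes.

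It remains to choose $\epsilon$ and to bound the total loss $\epsilon^{14\epsilon t}\cdot\epsilon^{-4\Delta\log\epsilon}$ suffered by $|Y|$. Writing $\epsilon=2^{-L}$ with $L=\log(1/\epsilon)$, the two exponents have magnitudes $14\epsilon tL$ (from Lemma \ref{EESzem}) and $4\Delta L^2=8\alpha^{-1}\sqrt m\,L^2$ (from Corollary \ref{GRR}). The first carries the large factor $\epsilon t=2^{2\alpha^{1/3}-L}\sqrt m$ and is negligible only once $L>2\alpha^{1/3}$, while the second grows like $L^2$; balancing these suggests $L=3\alpha^{1/3}$, i.e.\ $\epsilon=2^{-3\alpha^{1/3}}$. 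With this choice $4\Delta L^2=72\alpha^{-1/3}\sqrt m$, and $14\epsilon tL=42\alpha^{1/3}2^{-\alpha^{1/3}}\sqrt m$, which for $\alpha\ge27$ (so $\alpha^{1/3}\ge3$) is at most $48\alpha^{-1/3}\sqrt m$. Adding the two contributions bounds the loss by $120\alpha^{-1/3}\sqrt m$ in the exponent, giving $|Y'|\ge 2^{-120\alpha^{-1/3}\sqrt m}|Y|$, exactly as required.

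The main obstacle is this balancing of $\epsilon$: decreasing $\epsilon$ cheapens the pair produced by Lemma \ref{EESzem} (since $\epsilon\log(1/\epsilon)\to0$) but shrinks the subset $S$ from Corollary \ref{GRR} (since $\log^2(1/\epsilon)$ grows), and the optimal $\epsilon=2^{-\Theta(\alpha^{1/3})}$ is exactly what forces the exponent $\alpha^{1/3}$ throughout the statement. The remaining work is to verify the hypotheses of the two lemmas and the corollary, namely $\epsilon\le 1/8$, $t\ge\epsilon^{-1}$, and the size thresholds $|Y|\ge\epsilon^{4\Delta\log\epsilon}n'$ and $|S|\ge t\epsilon^{-14\epsilon t}$. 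This is where the assumptions $27\le\alpha\le\frac18\log^3 m$ and $|Y|\ge 2^{125\alpha^{-1/3}\sqrt m}$ are used: the bound $\alpha\le\frac18\log^3 m$ gives $2^{\alpha^{1/3}}\le\sqrt m$, which yields $t\ge\epsilon^{-1}$ and ensures that the assumed lower bound on $|Y|$ comfortably dominates the factors polynomial in $m$ (coming from $n'\le 2m$ and from $t$) in these thresholds.
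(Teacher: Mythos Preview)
Your proposal is correct and follows essentially the same route as the paper: delete the $\alpha\sqrt m$ highest-degree vertices to obtain $G'$ with $\Delta\le 2\alpha^{-1}\sqrt m$, observe that $Y$ contains no red $G'$, apply Corollary~\ref{GRR} with $\epsilon=2^{-3\alpha^{1/3}}$ to find a sparse $S\subseteq Y$, and then apply Lemma~\ref{EESzem} with $t=2^{2\alpha^{1/3}}\sqrt m$, combining the two losses $72\alpha^{-1/3}\sqrt m$ and $48\alpha^{-1/3}\sqrt m$ to get the claimed $120\alpha^{-1/3}\sqrt m$. Your explicit discussion of how $\epsilon$ is chosen to balance the two loss terms is a welcome addition that the paper leaves implicit.
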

\begin{proof}
Without loss of generality, assume that the color of the monochromatic pair $(X,Y)$ is red. 
Let $G'$ be the induced subgraph of $G$ formed by deleting the $|X|$ vertices of $G$ of 
largest degree. As $G$ has $m$ edges, it has $n \leq 2m$ vertices and the maximum degree of $G'$ satisfies $\Delta(G') \leq \frac{2m}{|X|}=\frac{2}{\alpha}\sqrt{m}$. 
The coloring restricted to $Y$ contains no monochromatic red copy of $G'$ as, otherwise, together with $X$, 
we would get a monochromatic copy of $G$. Let $\epsilon=2^{-3\alpha^{1/3}}$ and let $t=2^{2\alpha^{1/3}}\sqrt{m}$. 
Since $2^{\alpha^{1/3}}\leq \sqrt{m}$ we have that  $t \geq \epsilon^{-1}$.
Also note that, since $27 \leq \alpha \leq \frac{1}{8}\log^3 m$, we have that $42\alpha^{1/3}2^{-\alpha^{1/3}}\leq 48\alpha^{-1/3}$ and
$2^{5\alpha^{-1/3}\sqrt{m}} \geq 2^{10\sqrt{m}/\log m} \geq m^{3/2} \geq 2^{2\alpha^{1/3}}\sqrt{m}=t$.
Applying Corollary \ref{GRR} to the red graph restricted to $Y$, we find a subset $S \subset Y$ with 
\begin{eqnarray*}
|S| &\geq& \epsilon^{-4\Delta(G') \log \epsilon}|Y| \geq 
\Big(2^{-3\alpha^{1/3}}\Big)^{-4 \cdot (2\alpha^{-1}\sqrt{m}) \cdot(-3 \alpha^{1/3})}|Y| = 2^{-72\alpha^{-1/3}\sqrt{m}}|Y|\\
&\geq& 
2^{53\alpha^{-1/3}\sqrt{m}}>m^3 \geq n
\end{eqnarray*}
such that the red density in $S$ is at most $\epsilon$. 
Then,  the size of $S$ satisfies 
$$|S| \geq  2^{53\alpha^{-1/3}\sqrt{m}} \geq 2^{5\alpha^{-1/3}\sqrt{m}}\cdot 2^{48\alpha^{-1/3}\sqrt{m}}
\geq t\, 2^{42\alpha^{1/3}2^{-\alpha^{1/3}}\sqrt{m}}=t \epsilon^{-14\epsilon t},$$
and we can apply Lemma \ref{EESzem} to $S$. By this lemma, $S$ contains a monochromatic pair 
$(X',Y')$ with $|X'|=t$ and $|Y'| \geq \epsilon^{14\epsilon t}|S|$. To complete the proof, recall that $|S| \geq 2^{-72\alpha^{-1/3}\sqrt{m}}|Y|$, and therefore  
$$|Y'| \geq \epsilon^{14\epsilon t}|S| \geq 2^{-48\alpha^{-1/3}\sqrt{m}}|S| \geq 2^{-120\alpha^{-1/3}\sqrt{m}}|Y|.$$
\end{proof}

\vspace{0.1cm}
\noindent {\bf Proof of Theorem \ref{main}.} 
Let $G$ be a graph with $m$ edges and without isolated vertices. Note that $G$ has at most $2m$ vertices. 
Suppose for contradiction that there is a red-blue edge-coloring of $K_N$ with $N=2^{250\sqrt{m}}$ which contains no
monochromatic copy of $G$. Since, as was mentioned in the introduction, $r(K_{2m}) \leq 2^{4m}$ we have that
$m \geq 60^2$. Applying Lemma \ref{EESzek} with $k=\ell=27\sqrt{m}$, we find  a monochromatic pair $(X_1,Y_1)$ with $|X_1| \geq 27\sqrt{m}$ and 
$$|Y_1| \geq {k+\ell \choose k}^{-1}N-k-\ell \geq 4^{-27\sqrt{m}}N= 2^{196\sqrt{m}}.$$ 
Define $\alpha_1=27$ and $\alpha_{i+1}=2^{2\alpha_i^{1/3}}$. An easy computation shows that $\alpha_{i+1}\geq (4/3)^3 \alpha_i$ for all $i$ and therefore
$\alpha_i^{-1/3} \leq 3^{-1}(3/4)^{i-1}$. In particular this implies that
$$\sum_{j =1}^i \alpha_j^{-1/3} \leq \frac{1}{3}\sum_{j=0}^{i} (3/4)^{-j}=
\frac{1}{3}\sum_{j \geq 0} (3/4)^{-j}- \frac{1}{3}\sum_{j \geq i+1} (3/4)^{-j} \leq 4/3-4\alpha_{i+1}^{-1/3}.$$ 

Since the red-blue edge-coloring has no monochromatic copy of $G$, we can repeatedly apply Lemma \ref{pairslemma}. 
After $i$ iterations, we have a monochromatic pair $(X_{i+1},Y_{i+1})$ with $|X_{i+1}| \geq \alpha_{i+1}\sqrt{m}$ and
\begin{eqnarray*} 
|Y_{i+1}| &\geq& 2^{-120\alpha_i^{-1/3}\sqrt{m}}|Y_i| \geq 2^{-120\sqrt{m} \sum_{j= 1}^i \alpha_j^{-1/3}}|Y_1| \geq
2^{196\sqrt{m}} 2^{-120\sqrt{m}(4/3-4\alpha_{i+1}^{-1/3})}\\
&\geq& 2^{(36+480\alpha_{i+1}^{-1/3})\sqrt{m}}.
\end{eqnarray*}
Hence we can continue iterations until the first index $i$ such that $\alpha_{i} \geq \frac{1}{8}\log^3 m$. Then 
for $\alpha = (\log m/2)^3 \geq 5^3$ we have a monochromatic pair $X=X_i, |X| \geq \alpha \sqrt{m}$ and $Y=Y_i, |Y| \geq 2^{36\sqrt{m}} \geq
2^{125\alpha^{-1/3}\sqrt{m}}$. Then applying 
Lemma \ref{pairslemma} one more time we obtain a monochromatic pair $(X',Y')$ with $|X'| \geq 2^{2\alpha^{1/3}}\sqrt{m}=m^{3/2} \geq 2m$ 
and we can embed $G$ (which has at most $2m$ vertices) into the monochromatic clique $X'$, a contradiction.
\hfill $\Box$ 

\section{Concluding remarks}

A graph is {\it $d$-degenerate} if every induced subgraph of it has a vertex
of degree at most $d$. Notice that graphs with maximum degree $d$
are $d$-degenerate. This notion nicely captures the concept of
sparse graphs as every $t$-vertex subgraph of a $d$-degenerate graph
has at most $td$ edges. (Indeed, remove from the subgraph a vertex
of minimum degree, and repeat this process in the remaining
subgraph.) Burr and Erd\H{o}s \cite{BE75} conjectured that, for each
positive integer $d$, there is a constant $c(d)$ such that $r(H)
\leq c(d)n$ for every $d$-degenerate graph $H$ on $n$ vertices. This
well-known and difficult conjecture is a substantial generalization
of the results on Ramsey numbers of bounded-degree
graphs (mentioned in introduction) and progress on this problem was made only recently.

Improving an earlier polynomial bound of \cite{KoRo1}, we obtained, together with Kostochka,
the first nearly linear bound on the Ramsey numbers of $d$-degenerate
graphs. In \cite{KoSu} we proved that such graphs satisfy $r(H)
\leq c(d)n^{1+\epsilon}$ for any fixed $\epsilon>0$. The best current estimate, showing that 
$r(H) \leq 2^{c(d) \sqrt{\log n}} n$, is due to Fox and Sudakov \cite{FS09}.
In spite of this progress, the Burr-Erd\H{o}s conjecture is still open even for the very special case of $2$-degenerate
graphs. However, it seems plausible that $r(H) \leq 2^{c d}n$ (for some constant $c$) holds in general
for every $d$-degenerate graph $H$ on $n$ vertices. Such an estimate would be a far reaching generalization of the  
results about Ramsey numbers of bounded degree graphs and also of Theorem \ref{main}. Indeed, it is easy to check that
every graph with $m$ edges is $\sqrt{2m}$-degenerate.

Finally, we would like to stress that the proofs given in this paper are highly
specific to the 2-color case. 
The $k$-color Ramsey number $r_k(G)$ is the least positive integer $N$ such
that every $k$-coloring of the edges of a complete graph
$K_N$, contains a monochromatic copy of the graph $G$ in one of the colors.
It would be interesting to understand, for $k \geq 3$, the order of magnitude of the $k$-color Ramsey number of a 
graph with $m$ edges.
Also for bounded degree graphs the best results that are known in the $k$-color
case are much worse than for $2$ colors. For example (see \cite{FS07}), for a 
graph $G$ on $n$ vertices with maximum degree $\Delta$, 
we only know the bound  $r_k(G) \leq 2^{c(k) \Delta^2} n$. Improving it to $r_k(G) \leq 2^{c(k) \Delta^{1+o(1)}} n$ would be of considerable
interest.

\vspace{0.3cm}
\noindent
{\bf Acknowledgment.} I would especially like to thank Jacob Fox for valuable comments and 
for writing  a draft of the proof of the main theorem. I also had many stimulating discussions with David Conlon and want to thank
him for reading very carefully an early version of this manuscript.

\end{document}